\documentclass[11pt]{amsart}

\usepackage{amsfonts}
\usepackage{amsmath}
\usepackage{amssymb, color}
\usepackage{amscd}
\usepackage[mathscr]{eucal}
\usepackage{url}
\usepackage{enumerate}
\usepackage{comment}

\setlength{\textwidth}{\paperwidth}
\addtolength{\textwidth}{-2.4in}
\calclayout

\allowdisplaybreaks[1]

%=========================================================
%DEF OF THEOREMLIKE ENVIRONMENTS
%=========================================================

\theoremstyle{plain}

\newtheorem{theorem}{Theorem}[section]

\newtheorem{prop}[theorem]{Proposition}
\newtheorem{cor}[theorem]{Corollary}
\theoremstyle{definition}

\newtheorem{remark}[theorem]{Remark}
\newtheorem{ex}[theorem]{Example}

\numberwithin{equation}{section}

\makeatletter
\@namedef{subjclassname@2020}{%
  \textup{2020} Mathematics Subject Classification}
\makeatother

%=========================================================
%BEGIN DOCUMENT
%=========================================================

\begin{document}

\title[Self-conjugate $t$-core partitions and applications]{Self-conjugate $t$-core partitions and applications}

\author{Madeline Locus Dawsey and Benjamin Sharp}

\address{Madeline Locus Dawsey, Department of Mathematics, The University of Texas at Tyler, Tyler, TX 75799, USA}

\address{Benjamin Sharp, Department of Mathematics, The University of Texas at Tyler, Tyler, TX 75799, USA}

\email{mdawsey@uttyler.edu}

\email{bsharp4@patriots.uttyler.edu}

\thanks{The first author is grateful for the support of an AMS-Simons Travel Grant and an internal grant from the University of Texas at Tyler.}

\keywords{partition bijection, hook length, $t$-core, Hurwitz class number.}

\subjclass[2020]{Primary: 05A17; Secondary: 11P81, 11P99, 11E41}

\begin{abstract}
Partition theory abounds with bijections between different types of partitions.  One of the most famous partition bijections maps each self-conjugate partition of a positive integer $n$ to a partition of $n$ into distinct odd parts, and vice versa.  Here we prove new necessary and sufficient conditions for a self-conjugate partition to be $t$-core, in terms of only the parts of the corresponding partition into distinct odd parts, by proving a new hook length formula.  Corollaries of these results include new applications of $t$-core self-conjugate partitions to subsets of the natural numbers, due to the recent investigation of a new partition statistic called the supernorm by the first author, Just, and Schneider, as well as many results on $t$-cores by Bringmann, Kane, Males, Ono, Raji, and others.  We provide several examples of these applications, one of which gives a new formula for certain families of Hurwitz class numbers.
\end{abstract}

\maketitle

%%%%%%%%%%%%%%%%%%%%%%%%%%%%%

\section{Introduction and statement of results}

A \emph{partition} $\lambda$ of a positive integer $n$ is a non-increasing sequence $\lambda=\left(\lambda_1,\dots,\lambda_k\right)$ of positive integers which sum to $n$.  The integers $\lambda_1,\dots,\lambda_k$ are called the \emph{parts} of the partition.  The sum $n$ is called the \emph{size} of the partition and denoted $|\lambda|$, and the number of parts is called the \emph{length} of the partition and denoted $\ell(\lambda)$.  The notation $\lambda\vdash n$ is also common notation for a partition $\lambda$ of size $n$.  Despite their simplicity, integer partitions have many, many deep applications in varied fields, including algebraic and analytic number theory as well as combinatorics.  For example, see \cite{A,AE,HW} for background information on partitions and \cite{HR,Rademacher,R1,R2,R3} for several famous results on the number of partitions of any fixed size $n$.  Some applications arise from certain sets of restricted partitions and bijections between those sets.  The most classical example of a partition bijection is due to Euler:
\begin{align*}
&\textit{The number of partitions of a positive integer }n\textit{ into odd parts}\\
&\textit{is equal to the number of partitions of }n\textit{ into distinct parts}.
\end{align*}
This particular partition bijection can be proved by cleverly constructing an explicit bijection between the set of partitions into odd parts and the set of partitions into distinct parts which uses uniqueness of the binary representations of the parts.  Constructive methods of proof are common for partition bijections in general.

Each partition $\lambda=\left(\lambda_1,\lambda_2,\dots,\lambda_k\right)\vdash n$ has an associated \emph{Young diagram}, which is a diagram of $n$ total boxes arranged in $\ell(\lambda)=k$ rows such that row $i$ contains $\lambda_i$ boxes, for all $1\leq i\leq k$.  A \emph{self-conjugate partition} of a positive integer $n$ is a partition whose corresponding Young diagram is symmetric by reflection across the main diagonal.  Throughout, $\gamma$ will be used to denote a self-conjugate partition.  Another famous example of a partition bijection is:
\begin{align}\label{bijection}
&\textit{The number of self-conjugate partitions of a positive integer }n\\
&\textit{is equal to the number of partitions of }n\textit{ into distinct odd parts}.\nonumber
\end{align}
The proof of this bijection is simple using only the Young diagram of the self-conjugate partition.  In particular, the main ingredient involves what are called the \emph{hook lengths} (alternatively, \emph{hook numbers}) of the boxes along the main diagonal.  Let $(i,j)$ denote the box in row $i$ and column $j$ of  the Young diagram of a partition $\lambda$.  The hook length of box $(i,j)$ is defined to be the number of boxes below $(i,j)$, to the right of $(i,j)$, and at the box $(i,j)$ itself, and it is denoted $h_\lambda(i,j)$.  See Figure 1 below for an example, and notice that this is also a self-conjugate partition.

\begin{center}
\begin{itemize}\itemsep-1.5pt
\item[]\hspace{5cm}\fbox{\parbox{.3cm}{\hspace{-2.42pt}13}}\hspace{-.5pt}\fbox{\parbox{.3cm}{\hspace{-2.42pt}10}}\hspace{-.5pt}\fbox{\parbox{.3cm}{8}}\hspace{-.5pt}\fbox{\parbox{.3cm}{7}}\hspace{-.5pt}\fbox{\parbox{.3cm}{4}}\hspace{-.5pt}\fbox{\parbox{.3cm}{2}}\hspace{-.5pt}\fbox{\parbox{.3cm}{1}}\\
\item[]\hspace{5cm}\fbox{\parbox{.3cm}{\hspace{-2.42pt}10}}\hspace{-.5pt}\fbox{\parbox{.3cm}{7}}\hspace{-.5pt}\fbox{\parbox{.3cm}{5}}\hspace{-.5pt}\fbox{\parbox{.3cm}{4}}\hspace{-.5pt}\fbox{\parbox{.3cm}{1}}\\
\item[]\hspace{5cm}\fbox{\parbox{.3cm}{8}}\hspace{-.5pt}\fbox{\parbox{.3cm}{5}}\hspace{-.5pt}\fbox{\parbox{.3cm}{3}}\hspace{-.5pt}\fbox{\parbox{.3cm}{2}}\\
\item[]\hspace{5cm}\fbox{\parbox{.3cm}{7}}\hspace{-.5pt}\fbox{\parbox{.3cm}{4}}\hspace{-.5pt}\fbox{\parbox{.3cm}{2}}\hspace{-.5pt}\fbox{\parbox{.3cm}{1}}\\
\item[]\hspace{5cm}\fbox{\parbox{.3cm}{4}}\hspace{-.5pt}\fbox{\parbox{.3cm}{1}}\\
\item[]\hspace{5cm}\fbox{\parbox{.3cm}{2}}\\
\item[]\hspace{5cm}\fbox{\parbox{.3cm}{1}}
\end{itemize}
\vspace{.2cm}
Figure 1: Young diagram of $\gamma=(7,5,4,4,2,1,1)$ with hook lengths
\end{center}

For example, $h_\gamma(2,3)=5$ and $h_\gamma(3,1)=8$.

A self-conjugate partition $\gamma\vdash n$ corresponds to a partition $\lambda=\left(\lambda_1,\lambda_2,\dots,\lambda_k\right)\vdash n$ into distinct odd parts via the partition bijection \eqref{bijection} mentioned above if the hook lengths of $\gamma$ along the main diagonal yield the parts of $\lambda$: $h_\gamma(i,i)=\lambda_i$ for each $1\leq i\leq k$.  In Figure 1, the partition $\lambda$ into distinct odd parts that corresponds in such a way to the given self-conjugate partition $\gamma$ is the partition $\lambda=(13,7,3,1)$.  Throughout, when we say that each self-conjugate partition $\gamma$ corresponds to a partition $\lambda$ into distinct odd parts, we mean that $\gamma$ and $\lambda$ map to each other via this bijection.

Divisibility properties of hook lengths have been studied extensively, although they are difficult to prove in general.  A partition is called \emph{$t$-core} if none of its hook lengths are divisible by $t$.  Results have been proven previously to determine exactly when a partition is $t$-core, and these results typically depend on Frobenius symbols and beta-sets (for example, see Olsson's work \cite{O1,O2} and Vandehey's undergraduate thesis \cite{V}).  Applications of hook length divisibility results have been developed as well (see work of Garvan, Granville, Ono, and Sze \cite{Garvan,GO,Ono1,Ono2,OS} for background on the topic), some of which are specifically for self-conjugate partitions (for example, see work of Bringmann, Kane, Males, Ono, and Raji \cite{BKM,OR}).  These applications for self-conjugate partitions will be provided in Section \ref{section_hooks}.

As of yet, it seems that no results for $t$-core self-conjugate partitions depend solely on the corresponding partition into distinct odd parts, but such results could deepen these applications when paired with recent work of the first author, Just, and Schneider \cite{DJS} on a new partition statistic called the \emph{supernorm} of a partition (see Section \ref{sec_Hurwitz} for more details).  In particular, the supernorm maps each partition to the prime factorization of a positive integer which is uniquely determined by the parts of the partition.  For this reason, taking advantage of the corresponding partition into distinct odd parts provides enough information to achieve meaningful results about prime factorizations of integers via the supernorm map, whereas formulas using Frobenius symbols and beta-sets do not.

Here we exploit the partition bijection \eqref{bijection} to prove a new formula for hook lengths of self-conjugate partitions based only on the parts of the corresponding partitions into distinct odd parts.  This formula immediately gives necessary and sufficient conditions for when a self-conjugate partition is $t$-core.  First, we state a simple preliminary proposition regarding hook length divisibility related to gaps between parts of partitions to give an idea of the combinatorial flavor of the results and applications discussed in this paper.

\begin{prop}
Let $\gamma$ be a self-conjugate partition, and let $\lambda=\left(\lambda_1,\lambda_2,\dots,\lambda_k\right)$ be the corresponding partition into distinct odd parts.  If $\lambda_i-\lambda_{i+1}\geq2(t+1)$ for any $1\leq i<k$, then $\gamma$ is not t-core.
\end{prop}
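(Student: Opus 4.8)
The plan is to work directly with the Young diagram of $\gamma$ and to locate a hook of length divisible by $t$ whenever a large gap $\lambda_i - \lambda_{i+1} \geq 2(t+1)$ occurs. The starting point is the standard description of the bijection \eqref{bijection}: if $\lambda = (\lambda_1,\dots,\lambda_k)$ has distinct odd parts $\lambda_j = 2a_j+1$, then $\gamma$ is obtained by placing nested hooks of arm-length-plus-leg-length equal to $a_j$ along the main diagonal, so that $h_\gamma(j,j) = \lambda_j$ and more precisely $\gamma_j = a_j + j$ for each $j$ (the $j$-th diagonal box sits at $(j,j)$, has $a_j$ boxes strictly below it and $a_j$ boxes strictly to its right, by self-conjugacy). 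In particular the arm of the $j$-th diagonal box has length $\gamma_j - j = a_j = (\lambda_j-1)/2$.

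First I would fix an index $i < k$ with $\lambda_i - \lambda_{i+1} \geq 2(t+1)$ and record the consequence $a_i - a_{i+1} \geq t+1$, i.e. $(\gamma_i - i) - (\gamma_{i+1} - (i+1)) \geq t+1$, which simplifies to $\gamma_i - \gamma_{i+1} \geq t$. Next I would examine the hook lengths in row $i$ of the Young diagram, i.e. the values $h_\gamma(i,j)$ for $j = 1,\dots,\gamma_i$. The key elementary fact is that as $j$ increases by $1$ within a fixed row, the hook length $h_\gamma(i,j)$ either decreases by $1$ or stays within a controlled range; more usefully, the set of hook lengths appearing in row $i$ is exactly $\{1,2,\dots,\gamma_i\}$ with the "leg contributions" removed — concretely, the hook lengths in the first row of any partition form the set $\{h_\gamma(1,j)\}$ which is well understood, and for a general row $i$ one reduces to the subpartition below and to the right. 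The cleanest route: the hook lengths in row $i$ running over columns $i, i+1, \dots, \gamma_i$ (the part of the row at or past the diagonal) take the consecutive-ish values starting from $h_\gamma(i,i) = \lambda_i = 2a_i+1$ and decreasing, and crucially they include every integer in the interval between $h_\gamma(i,i+1)$ and $h_\gamma(i,\gamma_i)=1$ that is not "blocked" by a longer column. Since row $i+1$ has length $\gamma_{i+1}$, columns $i+1$ through $\gamma_{i+1}$ of row $i$ have a leg of length $\geq 1$, while columns $\gamma_{i+1}+1$ through $\gamma_i$ of row $i$ have leg length $0$; the latter are exactly $\gamma_i - \gamma_{i+1} \geq t$ consecutive boxes whose hook lengths are the consecutive integers $1, 2, \dots, \gamma_i - \gamma_{i+1}$ read right to left. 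A run of $\gamma_i - \gamma_{i+1} \geq t$ consecutive integers contains a multiple of $t$, so one of these hook lengths is divisible by $t$, and hence $\gamma$ is not $t$-core.

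The main obstacle — and the step I would write out most carefully — is the claim that the boxes in row $i$ lying in columns $\gamma_{i+1}+1, \dots, \gamma_i$ have hook lengths exactly $\gamma_i - \gamma_{i+1}, \gamma_i - \gamma_{i+1} - 1, \dots, 2, 1$. This is because such a box $(i,j)$ has no box beneath it (since $\gamma_{i+1} < j$ forces $\gamma_{i'} \le \gamma_{i+1} < j$ for all $i' > i$ by monotonicity), so its leg length is $0$ and $h_\gamma(i,j) = \gamma_i - j + 1$; letting $j$ range over $\gamma_{i+1}+1 \le j \le \gamma_i$ gives precisely the integers from $1$ to $\gamma_i - \gamma_{i+1}$. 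Everything else is bookkeeping: translating the hypothesis $\lambda_i - \lambda_{i+1} \ge 2(t+1)$ through $\lambda_j = 2a_j + 1$ and $\gamma_j = a_j + j$ into $\gamma_i - \gamma_{i+1} \ge t + 1 > t$, and invoking that any $t$ (indeed any $\ge t$) consecutive positive integers contain a multiple of $t$. I would also double-check the edge interpretation of the bijection on the standard reference example ($\gamma = (7,5,4,4,2,1,1) \leftrightarrow \lambda = (13,7,3,1)$, where $a = (6,3,1,0)$ and indeed $\gamma_j = a_j + j = (7,5,4,4)$ for $j \le 4$) to make sure the indexing conventions are consistent with Figure 1.
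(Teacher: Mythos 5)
Your proof is correct and takes essentially the same approach as the paper's: both locate the overhang of the $i$-th diagonal hook beyond the $(i+1)$-th one and find there a box of hook length exactly $t$ (your box $\left(i,\gamma_i-t+1\right)$ is precisely the transpose of the paper's box $\left(i+\tfrac{1}{2}\left(\lambda_i-1\right)-(t-1),\,i\right)$). Your explicit check that these overhang boxes have leg length zero, so their hook lengths run through the consecutive integers $1,\dots,\gamma_i-\gamma_{i+1}\geq t$, is a careful justification of a step the paper asserts more briefly.
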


\begin{proof}
Suppose $\lambda_i-\lambda_{i+1}\geq2(t+1)$ for some $1\leq i<k$.  Applying the bijection $\lambda\mapsto\gamma$, $\lambda_i$ and $\lambda_{i+1}$ become consecutive hooks of $\gamma$ with corners located along the main diagonal.  Once $\lambda_i$ (resp. $\lambda_{i+1}$) is folded in the middle by the bijection, each leg has $\frac{1}{2}\left(\lambda_i-1\right)$ boxes $\big($resp. $\frac{1}{2}\left(\lambda_{i+1}-1\right)$ boxes$\big)$, excluding the corner box.  By assumption, $\lambda_i\geq\lambda_{i+1}+2(t+1)$, and therefore each leg of $\lambda_i$ has at least $$\frac{1}{2}\left(\lambda_{i+1}+2(t+1)-1\right)=\frac{1}{2}\left(\lambda_{i+1}+1\right)+t$$ boxes, excluding the corner box.  This means that each leg of $\lambda_i$ has at least $$\left(\frac{1}{2}\left(\lambda_{i+1}+1\right)+t\right)-\left(\frac{1}{2}\left(\lambda_{i+1}-1\right)\right)=t+1$$ more boxes than each leg of $\lambda_{i+1}$.  Since $\lambda_i$ is centered at box $(i,i)$ and $\lambda_{i+1}$ is centered at box $(i+1,i+1)$, $\lambda_i$ appears in at least $(t+1)-1=t$ more rows of the Young diagram than $\lambda_{i+1}$.  Thus, the hook length of box $$\left(i+\frac{1}{2}\left(\lambda_i-1\right)-(t-1),i\right)$$ is exactly $t$, and so $\gamma$ is not $t$-core.
\end{proof}

More generally, one can calculate all of the hook lengths of a self-conjugate partition using \eqref{bijection}.  The following theorem provides a new way to calculate these hook lengths.

\begin{theorem}\label{hook_thm}
Let $\gamma$ be a self-conjugate partition, and let $\lambda=\left(\lambda_1,\lambda_2,\dots,\lambda_k\right)$ be the corresponding partition into distinct odd parts.  Let $i,j$ be integers such that $(i,j)$ is a box in the Young diagram of $\gamma$, and assume without loss of generality that $i\geq j$.  Then the hook lengths of $\gamma$ are given by $$h_\gamma(i,j)=h_\gamma(j,i)=\begin{cases}
\dfrac{1}{2}\left(\lambda_i+\lambda_j\right),&\mbox{if }i\leq k\\
\\[1pt]
\dfrac{1}{2}\left(\lambda_j+1\right)+j-i-1\\
\mbox{}\hspace{0.25cm}+\#\left\{j\leq m\leq k:\lambda_m\geq 2i-2m+1\right\},&\mbox{if }i>k.
\end{cases}$$
\end{theorem}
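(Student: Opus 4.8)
The plan is to track precisely how the bijection \eqref{bijection} builds the Young diagram of $\gamma$ from the distinct odd parts $\lambda_1 > \lambda_2 > \cdots > \lambda_k$, and then read off hook lengths directly. Recall that under the bijection, each part $\lambda_m$ is bent at its midpoint into a symmetric hook whose corner lies at the diagonal box $(m,m)$; since $\lambda_m = 2a_m+1$ with $a_m := \tfrac12(\lambda_m-1)$, this hook occupies the $a_m$ cells $(m,m+1),\dots,(m,m+a_m)$ in row $m$ and the $a_m$ cells $(m+1,m),\dots,(m+a_m,m)$ in column $m$, plus the corner. Consequently $\gamma_m = m + a_m = m + \tfrac12(\lambda_m-1)$ for $1 \le m \le k$, and $\gamma$ has exactly $k + a_1 = \gamma_1$ rows. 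The key combinatorial fact I would record first is a clean description of which boxes lie in the diagram: for $i \le k$ the row length is $\gamma_i = \tfrac12(\lambda_i+1) + (i-1)$ directly from the formula above; for $i > k$, row $i$ is no longer ``seeded'' by a diagonal hook, so its length is governed by how many of the column-legs of the earlier hooks reach down to row $i$. Precisely, the leg of hook $\lambda_m$ occupies column $m$ in rows $m+1,\dots,m+a_m$, so it contributes a box in row $i$ exactly when $m < i \le m + a_m$, i.e.\ when $a_m \ge i-m$, i.e.\ when $\lambda_m \ge 2i-2m+1$. By symmetry (self-conjugacy) the length of row $i$ for $i>k$ is $\gamma_i = \#\{1 \le m \le k : \lambda_m \ge 2i-2m+1\}$, and this is the quantity that will reappear in the theorem.

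Next I would compute the hook length $h_\gamma(i,j)$ with $i \ge j$ by the standard formula $h_\gamma(i,j) = (\gamma_i - j) + (\gamma'_j - i) + 1$, where $\gamma'$ is the conjugate; since $\gamma$ is self-conjugate, $\gamma'_j = \gamma_j$, so $h_\gamma(i,j) = \gamma_i + \gamma_j - i - j + 1$. This already handles the first case: when $i \le k$ (hence $j \le i \le k$ too, since $j \le i$), substituting $\gamma_i = \tfrac12(\lambda_i+1)+i-1$ and $\gamma_j = \tfrac12(\lambda_j+1)+j-1$ gives
\[
h_\gamma(i,j) = \left(\tfrac12(\lambda_i+1)+i-1\right) + \left(\tfrac12(\lambda_j+1)+j-1\right) - i - j + 1 = \tfrac12(\lambda_i+\lambda_j),
\]
and the symmetry $h_\gamma(i,j)=h_\gamma(j,i)$ is immediate from the symmetric expression. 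For the second case $i > k$ (still with $j \le i$, but now $j$ may be $\le k$ or $> k$), I plug in $\gamma_i = \#\{1 \le m \le k : \lambda_m \ge 2i-2m+1\}$ from the first paragraph and $\gamma_j$ (either formula depending on whether $j \le k$), and simplify. The main bookkeeping step is to rewrite $\gamma_i$ as $\#\{j \le m \le k : \lambda_m \ge 2i-2m+1\}$ plus $\#\{1 \le m < j : \lambda_m \ge 2i-2m+1\}$, and show the second count together with the $-i$ term and the $\gamma_j - j + 1$ contribution collapses to $\tfrac12(\lambda_j+1) + j - i - 1$. This works because for $m < j \le k$ the hook $\lambda_m$ is long enough to reach row $j$ (that is the content of $\gamma$ being a genuine partition with $\gamma_j \ge j$), and more quantitatively one checks $\lambda_m \ge 2i-2m+1$ is automatic in the relevant range, so that count is exactly $\gamma_j - j$ — I would verify this via the monotonicity $\lambda_1 > \cdots > \lambda_k$ and the fact that $(m,j)$ being in the diagram for all $m<j$ forces the leg inequalities.

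The step I expect to be the genuine obstacle is the case analysis for $i > k$ when $j$ itself exceeds $k$ — there the ``$\tfrac12(\lambda_j+1)$'' term in the stated formula is not literally a part of $\lambda$ being halved but must emerge from a second copy of the counting argument, so I will need to argue that the theorem's second-case formula is internally consistent, i.e.\ that applying it with the roles of the first-paragraph row-length identity for $\gamma_j$ reproduces exactly $\tfrac12(\lambda_j+1)+j-i-1 + \#\{j\le m\le k:\lambda_m\ge 2i-2m+1\}$. Concretely, the delicate point is handling boxes far from the diagonal where several hook legs overlap a single column, and making sure no box is double-counted or missed; I would manage this by always reducing to the two clean facts established in the first paragraph — the exact row lengths $\gamma_i$ for $i\le k$ and for $i>k$ — and then letting the self-conjugate hook formula $h_\gamma(i,j)=\gamma_i+\gamma_j-i-j+1$ do the rest purely algebraically. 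A small sanity check on $\gamma=(7,5,4,4,2,1,1)$ with $\lambda=(13,7,3,1)$ (e.g.\ $h_\gamma(3,1)=\tfrac12(3+13)=8$ and $h_\gamma(5,1) = \tfrac12(13+1) + 1 - 5 - 1 + \#\{1\le m\le 4:\lambda_m\ge 2\cdot5-2m+1\} = 7 - 5 + 1 = \ldots$, verifying against Figure 1) will confirm the constants.
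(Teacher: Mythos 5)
Your argument is correct and lands very close to the paper's, but it is routed slightly differently: the paper counts the leg and the arm of the hook at $(i,j)$ directly (the leg from the column length $\tfrac12(\lambda_j+1)+j-1$, and the arm as the number of columns $m$ with $j\le m\le k$ containing at least $i$ boxes), whereas you first record the full row lengths $\gamma_i$ in terms of $\lambda$ and then feed them into the classical identity $h_\gamma(i,j)=\gamma_i+\gamma'_j-i-j+1$ with $\gamma'=\gamma$. The price of your route is the extra bookkeeping identity $\#\{1\le m<j:\lambda_m\ge 2i-2m+1\}=j-1$, which does hold: every column $m<j$ is at least as long as column $j$, and column $j$ reaches row $i$ because $(i,j)$ is a box of $\gamma$. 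Note, however, that in your sketch you assert this count equals $\gamma_j-j$; that is a slip, since $\gamma_j-j=\tfrac12(\lambda_j-1)$, while the count you actually need is $j-1$ (e.g.\ for $j=1$ the count is $0$, not $\gamma_1-1$). With $j-1$ in its place the algebra collapses exactly to $\tfrac12(\lambda_j+1)+j-i-1+\#\{j\le m\le k:\lambda_m\ge 2i-2m+1\}$. Also, the case you flag as the genuine obstacle, $i>k$ with $j>k$, is vacuous: the Durfee square of $\gamma$ has side exactly $k=\ell(\lambda)$ (one diagonal box per folded part), so no box of $\gamma$ has both coordinates exceeding $k$; this is why the paper restricts the second case to $1\le j\le k<i$ without further comment. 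With those two points repaired, your proof is complete and delivers the same content as the paper's, just phrased through row lengths rather than a direct leg-and-arm count.
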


\begin{remark}
Theorem \ref{hook_thm} for $i\leq k$ was stated without proof in \cite[Lemma 2.1]{HN}.
\end{remark}

\begin{remark}\label{remark}
The above formula for $i\leq k$ gives the hook lengths of $\gamma$ within the \emph{Durfee square} (the largest square which can fit in the top left corner of the Young diagram), whereas the formula for $i>k$ gives the hook lengths of $\gamma$ outside of the Durfee square (see Section \ref{sec_proofs} for details).  The fact that $h_\gamma(i,j)=h_\gamma(j,i)$ is discussed in further detail in Section \ref{section_sc_partitions}.
\end{remark}

This hook length formula gives a process by which one can immediately identify the self-conjugate partitions which are $t$-core, although in practice this process is quite inefficient as it requires the calculation of at least half of the hook lengths in the Young diagram.

\begin{cor}\label{corollary}
Let $\gamma$ be a self-conjugate partition, and let $\lambda=\left(\lambda_1,\lambda_2,\dots,\lambda_k\right)$ be the corresponding partition into distinct odd parts.  Then $\gamma$ is $t$-core if and only if $$\frac{1}{2}\left(\lambda_i+\lambda_j\right)\not\equiv0\pmod{t}$$ for all pairs of integers $(i,j)$ with $1\leq j\leq i\leq k$, and $$\frac{1}{2}\left(\lambda_j+1\right)+j-i-1+\#\left\{j\leq m\leq k:\lambda_m\geq 2i-2m+1\right\}\not\equiv0\pmod{t}$$ for all pairs of integers $(i,j)$ with $1\leq j\leq k<i\leq\frac{1}{2}\left(\lambda_1+1\right)$ and $\lambda_j\geq 2i-2j+1$.
\end{cor}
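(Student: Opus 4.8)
The plan is to obtain Corollary~\ref{corollary} as a direct translation of Theorem~\ref{hook_thm} once we identify precisely which pairs $(i,j)$ index the boxes of the Young diagram of $\gamma$. By definition $\gamma$ is $t$-core if and only if $t\nmid h_\gamma(i,j)$ for every box $(i,j)$, and since $\gamma$ is self-conjugate we have $h_\gamma(i,j)=h_\gamma(j,i)$ (see Remark~\ref{remark}), so it suffices to range over the boxes with $i\geq j$. Theorem~\ref{hook_thm} partitions these boxes into the case $i\leq k$ and the case $i>k$, producing the two expressions in the two displayed congruences; what remains is to check that the index ranges stated in the corollary are exactly the boxes with $i\geq j$ in each case.

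For the case $i\leq k$, the pairs $(i,j)$ with $1\leq j\leq i\leq k$ are exactly the boxes of $\gamma$ lying in the lower triangle of the Durfee square, which has side length $k=\ell(\lambda)$; every such pair is a box of $\gamma$, so here $\gamma$ contributes no hook divisible by $t$ precisely when $\tfrac12(\lambda_i+\lambda_j)\not\equiv 0\pmod t$ for all $1\leq j\leq i\leq k$. For the case $i>k$, I would first record that for $m\leq k$ the diagonal hook of length $\lambda_m$ has arm of length $\tfrac12(\lambda_m-1)$, so the $m$-th part of $\gamma$ is $\gamma_m=m-1+\tfrac12(\lambda_m+1)$, and hence by self-conjugacy, for $i>k$, $\gamma_i=\#\{m:\gamma_m\geq i\}=\#\{1\leq m\leq k:\lambda_m\geq 2i-2m+1\}$ (the contribution of $m>k$ vanishing since $\gamma_m\leq\gamma_{k+1}\leq k<i$). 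Because $\lambda$ has distinct odd parts, the sequence $\lambda_m+2m$ is non-increasing, so $\{m:\lambda_m\geq 2i-2m+1\}$ is an initial segment of $\{1,\dots,k\}$; therefore the box $(i,j)$ with $i>k\geq j$ lies in the Young diagram of $\gamma$ if and only if $j\leq\gamma_i$, equivalently $\lambda_j\geq 2i-2j+1$, and any such $i$ satisfies $i\leq j+\tfrac12(\lambda_j-1)\leq\tfrac12(\lambda_1+1)$. Since $\gamma_{k+1}\leq k$ forces every box in a row of index $>k$ to have column index $\leq k$, there are no boxes with both coordinates exceeding $k$, and substituting the second formula of Theorem~\ref{hook_thm} into $h_\gamma(i,j)\not\equiv 0\pmod t$ yields the remaining condition of the corollary.

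The only step beyond bookkeeping is the monotonicity observation that $\{m:\lambda_m\geq 2i-2m+1\}$ is an initial segment of $\{1,\dots,k\}$: this is exactly what converts the a priori membership condition ``$j\leq\gamma_i$'' into the clean inequality ``$\lambda_j\geq 2i-2j+1$'' and simultaneously produces the bound $k<i\leq\tfrac12(\lambda_1+1)$. Everything else is a line-by-line application of Theorem~\ref{hook_thm}, so I expect the write-up to be short.
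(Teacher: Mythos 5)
Your proposal is correct and follows essentially the same route as the paper, which derives the corollary immediately from Theorem~\ref{hook_thm} together with the observation (via equation~\eqref{iboxes}) that $\lambda_j\geq 2i-2j+1$ characterizes the boxes $(i,j)$ below the Durfee square. Your only addition is to spell out the initial-segment argument (that $\lambda_m+2m$ is non-increasing) showing this inequality is both necessary and sufficient for box membership and yields the bound $i\leq\tfrac12(\lambda_1+1)$, a detail the paper treats as clear.
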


The proof of Corollary \ref{corollary} is immediate from Theorem \ref{hook_thm} and the definition of $t$-core partitions.  The requirement that $\lambda_j\geq 2i-2j+1$ guarantees that the box $(i,j)$ is contained in the Young diagram of $\gamma$; this is clear from equation \eqref{iboxes} and the preceding text, which identify the columns of the Young diagram that contain at least $i$ boxes.  Also note that $\frac{1}{2}\left(\lambda_1+1\right)=\ell(\gamma)$.

The following is an example of a self-conjugate partition which we determine to be $6$-core but not $7$-core, based on Corollary \ref{corollary}.

\begin{ex}
Define the self-conjugate partition $\gamma$ and the corresponding partition $\lambda$ into distinct odd parts by $$\gamma=(7,5,4,4,2,1,1)\hspace{.5cm}\text{and}\hspace{.5cm}\lambda=(13,7,3,1).$$  Notice that these are the same partitions $\gamma$ and $\lambda$ as in Figure 1.  The pairs $(i,j)$, $1\leq j\leq i\leq k$, and the respective values of $\frac{1}{2}\left(\lambda_i+\lambda_j\right)$ are
\begin{align*}
(1,1)&:\hspace{.5cm}\tfrac{1}{2}(13+13)=13,\\
(2,1)&:\hspace{.5cm}\tfrac{1}{2}(7+13)=10,\\
(3,1)&:\hspace{.5cm}\tfrac{1}{2}(3+13)=8,\\
(4,1)&:\hspace{.5cm}\tfrac{1}{2}(1+13)=7,\\
(2,2)&:\hspace{.5cm}\tfrac{1}{2}(7+7)=7,\\
(3,2)&:\hspace{.5cm}\tfrac{1}{2}(3+7)=5,\\
(4,2)&:\hspace{.5cm}\tfrac{1}{2}(1+7)=4,\\
(3,3)&:\hspace{.5cm}\tfrac{1}{2}(3+3)=3,\\
(4,3)&:\hspace{.5cm}\tfrac{1}{2}(1+3)=2,\\
(4,4)&:\hspace{.5cm}\tfrac{1}{2}(1+1)=1.
\end{align*}
We see that none of the values above are divisible by 6.  The pairs $(i,j)$ with $1\leq j\leq k<i\leq\frac{1}{2}\left(\lambda_1+1\right)$ and $\lambda_j\geq 2i-2j+1$ and the respective values of $$\frac{1}{2}\left(\lambda_j+1\right)+j-i-1+\#\left\{j\leq m\leq k:\lambda_m\geq 2i-2m+1\right\}$$ are
\begin{align*}
(5,1)&:\left(7+1-5-1\right)+\#\{1\leq m\leq 4:\lambda_m\geq11-2m\}=2+2=4,\\
(6,1)&:\left(7+1-6-1\right)+\#\{1\leq m\leq 4:\lambda_m\geq13-2m\}=1+1=2,\\
(7,1)&:\left(7+1-7-1\right)+\#\{1\leq m\leq 4:\lambda_m\geq15-2m\}=0+1=1,\\
(5,2)&:\left(4+2-5-1\right)+\#\{2\leq m\leq 4:\lambda_m\geq11-2m\}=0+1=1.
\end{align*}
We also see that none of these values are divisible by 6.  Therefore, $\gamma$ is $6$-core.  To show that $\gamma$ is not $7$-core, it suffices to identify one pair $(i,j)$ in either of the cases above such that the corresponding calculation is divisible by 7.  For example, we see above that the hook length of box $(i,j)=(4,1)$ is $\frac{1}{2}(1+13)=7$.  Therefore, $\gamma$ is not $7$-core.  See Figure 1 above with hook lengths included for an illustration.  Notice also that the hook lengths below and including the main diagonal (equivalently, above and including the main diagonal, by symmetry) are equal to the values calculated from Corollary \ref{corollary}, as mentioned in Remark \ref{remark}.
\end{ex}

\begin{comment}
The following theorem also gives a necessary and sufficient condition for a self-conjugate partition to be $t$-core.

\begin{theorem}\label{check_thm}
Let $\gamma$ be a self-conjugate partition, and let $\lambda=\left(\lambda_1,\lambda_2,\dots, \lambda_k\right)$ be the corresponding partition into distinct odd parts.  Then $\gamma$ is $t$-core if and only if $$\frac{1}{2}\left(\lambda_i+\lambda_j\right)\not\equiv0\pmod{t}$$ for all pairs $(i,j)$ such that $1\leq j \leq i \leq k$, and one of the following holds for each pair $(i,j)$ such that $1\leq j\leq k<i\leq\frac{1}{2}\left(\lambda_1+1\right)$ with $\lambda_j\geq 2i-2j+1$:
\begin{enumerate}
\item $tk-m+j-1>k$,
\item $\lambda_{tk-m+j-1}<\lambda_j-2tk+2j-2$,
\item $\lambda_{tk-m+j}\geq\lambda_j-2tk+2j-4$.
\end{enumerate}
\end{theorem}
%The issues with this theorem are: the variable $m$ is never defined, and the proof does not completely justify the theorem.  I can't verify that it's true with the proof provided, so I can't include it in the paper.
\end{comment}

Although Corollary \ref{corollary} requires calculations for at least half of the boxes in the Young diagram -- that is, at least $|\gamma|/2$ different hook length calculations -- it is a new formula which does not rely on Frobenius symbols and only requires parts of partitions.

In Section \ref{section_partitions}, we give more details about self-conjugate partitions and hook lengths, as well as some known applications of $t$-core partitions.  In Section \ref{sec_proofs}, we prove Theorem \ref{hook_thm}.  Finally, in Section \ref{sec_Hurwitz}, we provide some background on the recently defined supernorm map and a few explicit examples of resulting applications.

%%%%%%%%%%%%%%%%%%%%%%%%%%%%%

\section{Partitions and $t$-cores}\label{section_partitions}

Here we recall background information and known results on self-conjugate partitions and $t$-core partitions.  In Section \ref{section_sc_partitions}, we give more details about self-conjugate partitions and their hook lengths.  In Section \ref{section_hooks}, we recall some known results on self-conjugate $t$-core partitions.

%%%%%%%%%%%%%%%%%%%%%%%%%%%%%

\subsection{Self-conjugate partitions}\label{section_sc_partitions}

As mentioned in the previous section, a self-conjugate partition is a partition whose Young diagram is symmetric by reflection across the main diagonal (from the top left corner).  In other words, a partition $\gamma$ is self-conjugate if the \emph{conjugate} of $\gamma$, which is the new partition obtained by swapping the rows and columns of its Young diagram, is the same partition as $\gamma$ itself.

The hook lengths of a self-conjugate partition $\gamma$, which we recall count the number of boxes extending downward and to the right in a hook within the Young diagram of $\gamma$, are symmetric in self-conjugate partitions.  Namely, we have that if $\gamma$ is self-conjugate, then $h_\gamma(i,j)=h_\gamma(j,i)$ for all boxes $(i,j)$ in the Young diagram of $\gamma$.  It is straightforward to justify this fact using the symmetry of $\gamma$ itself.  Since $\gamma$ is symmetric by reflection across the main diagonal, the hook length of each box must also be the same for $\gamma$ as for its conjugate (which is also equal to $\gamma$).

Hook length symmetry does simplify our calculations somewhat, as we must only calculate the hook lengths of a little over half of the boxes in the Young diagram in order to use Corollary \ref{corollary} to determine whether a self-conjugate partition is $t$-core.  Specifically, the number of hook lengths one must calculate using Theorem \ref{hook_thm} is the number of boxes below the Durfee square (or to the right of the Durfee square) plus the number of boxes whose intersection with the half of the area in the Durfee square below the main diagonal (resp. above the main diagonal) is nonempty.  Recall the partition bijection \eqref{bijection} between self-conjugate partitions and partitions into distinct odd parts.  For a self-conjugate partition $\gamma\vdash n$ corresponding via this bijection to a partition $\lambda$ into distinct odd parts, the discussion above leads to $$\frac{n-\ell(\lambda)^2}{2}+\left(\frac{\ell(\lambda)^2}{2}+\frac{\ell(\lambda)}{2}\right)=\frac{n+\ell(\lambda)}{2}$$ hook length calculations, since $\ell(\lambda)^2/2$ only counts half of each box along the main diagonal of $\gamma$ within the Durfee square.

%%%%%%%%%%%%%%%%%%%%%%%%%%%%%

\subsection{Self-conjugate $t$-core partitions}\label{section_hooks}

In this section, we recall known results on the number of self-conjugate $2$-core, $3$-core, and $7$-core partitions of any integer $n$.  These are the examples for which we will give applications of the supernorm map in Section \ref{sec_Hurwitz}.

We say a partition $\gamma$ of size $n$ is \emph{perfectly triangular} if $\gamma$ is of the form $$\gamma=(k,k-1,k-2,\dots,3,2,1).$$  Notice that for a partition $\gamma\vdash n$ of this form, $n$ must be a triangular number, i.e. $$n=k(k+1)/2=\sum_{i=1}^k i,$$ and $\gamma$ is self-conjugate.  This type of partition is referred to as a ``fancy triangle" in \cite{Rebecca}.  The following well-known theorem about $2$-core partitions can be found in \cite{Robbins}, for example, but we provide a proof below that uses Theorem \ref{hook_thm}.

\begin{theorem}\label{2core_thm}
Let $\gamma$ be a partition.  Then $\gamma$ is $2$-core if and only if $\gamma$ is perfectly triangular.
\end{theorem}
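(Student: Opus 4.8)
The plan is to prove both directions using the hook length formula from Theorem \ref{hook_thm}, phrased in terms of the corresponding partition $\lambda = (\lambda_1, \dots, \lambda_k)$ into distinct odd parts. First I would observe that a perfectly triangular partition $\gamma = (k, k-1, \dots, 2, 1)$ is visibly self-conjugate, so the bijection \eqref{bijection} applies; its main-diagonal hook lengths are $h_\gamma(i,i) = 2(k-i)+1$ for $1 \le i \le k$, so the corresponding partition into distinct odd parts is $\lambda = (2k-1, 2k-3, \dots, 3, 1)$, i.e.\ \emph{all} odd numbers from $1$ to $2k-1$ with no gaps. The key structural feature is that consecutive parts differ by exactly $2$, and the smallest part is $1$, so $k = \ell(\gamma) = \frac{1}{2}(\lambda_1+1)$ and $\lambda_m = 2k - 2m + 1$ for every $m$.

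For the ``if'' direction (perfectly triangular $\Rightarrow$ $2$-core), I would apply Theorem \ref{hook_thm} with $t = 2$. In the Durfee-square case $i \le k$, we get $h_\gamma(i,j) = \frac{1}{2}(\lambda_i + \lambda_j) = \frac{1}{2}((2k-2i+1)+(2k-2j+1)) = 2k - i - j + 1$; this is odd precisely when $i + j$ is even, and even otherwise, so I must instead just check it is never divisible by $2$ — wait, it clearly \emph{can} be even. Let me reconsider: the correct claim is that for the triangular partition all hook lengths are odd. Indeed, a cleaner route for the ``if'' direction is to invoke the well-known fact (or verify directly from the staircase shape) that the hook lengths of $\gamma = (k, k-1, \dots, 1)$ are exactly $\{1, 3, 5, \dots, 2k-1\}$, each appearing with the appropriate multiplicity, and in particular all odd; hence no hook length is divisible by $2$ and $\gamma$ is $2$-core. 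One verifies this from the formula: outside the Durfee square there are no boxes since the Durfee square of the staircase is the whole of its ``triangular'' part only when... actually the Durfee square of $(k,k-1,\dots,1)$ has side $\lceil k/2 \rceil$, so the second case of Theorem \ref{hook_thm} does contribute, and there one computes $h_\gamma(i,j) = \frac{1}{2}(\lambda_j+1) + j - i - 1 + \#\{j \le m \le k : \lambda_m \ge 2i-2m+1\}$ and checks oddness using $\lambda_j = 2k-2j+1$ and $\lambda_m \ge 2i - 2m+1 \iff m \le k - i + j$ (roughly), reducing the count to a closed form. I expect this bookkeeping in the $i > k$ case to be the main technical obstacle, though it is routine.

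For the ``only if'' direction, suppose $\gamma$ is $2$-core; I want to show $\lambda$ consists of all odd numbers $1, 3, \dots, 2k-1$. The contrapositive is cleanest: if $\lambda$ is \emph{not} the full staircase $(2k-1, 2k-3, \dots, 1)$, then either some gap $\lambda_i - \lambda_{i+1} \ge 4 = 2(t+1)$ with $t=1$... no, we need $t = 2$, so we want a hook length divisible by $2$. If the smallest part $\lambda_k \ge 3$, then $\frac{1}{2}(\lambda_k + \lambda_k) = \lambda_k$ is odd — that does not help directly either. Instead I would argue: among the Durfee-square hook lengths $\frac{1}{2}(\lambda_i + \lambda_j)$, if any two parts $\lambda_i, \lambda_j$ are not congruent mod $4$ then $\frac{1}{2}(\lambda_i + \lambda_j)$ is even. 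Since all $\lambda_i$ are odd and distinct, they lie in residue classes $1$ and $3$ mod $4$; if \emph{both} classes are occupied we immediately get an even hook length, contradiction. So all parts are $\equiv 1 \pmod 4$ or all $\equiv 3 \pmod 4$; but distinct positive integers all in one class mod $4$ and summing correctly, combined with the constraint that $\lambda_k \le$ something and the Proposition's gap bound (gaps $\ge 6$ forbidden for $t=2$), forces... hmm, this still needs the $i > k$ case to pin down that the part $1$ appears and there are no gaps. The honest plan: use Proposition (gaps $< 2(t+1) = 6$, so consecutive odd parts differ by $2$ or $4$) together with the mod-$4$ observation to force all differences to be $2$, and then a short argument with the $i>k$ formula (or the observation $\frac{1}{2}(\lambda_1+1) = \ell(\gamma)$ which already forces $\lambda_k = 1$) completes it. I expect the interplay between the two cases of Theorem \ref{hook_thm} to again be the delicate point, but no single step is hard.
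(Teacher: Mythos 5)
There is a genuine gap, and it starts with a factual error at the very beginning: the partition into distinct odd parts corresponding to the staircase $\gamma=(k,k-1,\dots,1)$ is \emph{not} $\lambda=(2k-1,2k-3,\dots,3,1)$. The Durfee square of the staircase has side $\lceil k/2\rceil$, not $k$, and the diagonal hook lengths are $h_\gamma(i,i)=2k-4i+3$, so the correct $\lambda$ is $(2k-1,2k-5,2k-9,\dots)$ with consecutive parts differing by $4$ and smallest part $1$ or $3$ (check $k=3$: $\gamma=(3,2,1)$ gives $\lambda=(5,1)$, not $(5,3,1)$, which does not even have the right size). Your formula $\lambda_m=2k-2m+1$ then yields $\tfrac12(\lambda_i+\lambda_j)=2k-i-j+1$, which you correctly notice can be even --- but the resolution is not to abandon the computation, it is that the input was wrong: with the correct parts one gets $\tfrac12(\lambda_i+\lambda_j)=2k-2i-2j+3$, always odd, which is exactly the paper's Durfee-square computation. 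Having hit this contradiction, your ``if'' direction retreats to citing as ``well-known'' the fact that all staircase hook lengths are odd, which is precisely the statement to be proved, and the promised verification of the $i>k$ case is both unexecuted and based on the wrong $\lambda_m$. The paper instead handles the region below the Durfee square by observing that it is itself a smaller staircase and inducting.

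The ``only if'' direction has a salvageable kernel --- the observation that if two parts lie in different residue classes mod $4$ then some Durfee-square hook length $\tfrac12(\lambda_i+\lambda_j)$ is even, which combined with the Proposition's gap bound forces all consecutive differences to equal $4$ --- but it is incomplete in two ways. First, it presupposes that a $2$-core partition is self-conjugate (otherwise there is no $\lambda$ to reason about); the theorem hypothesis is an arbitrary partition, and you never establish self-conjugacy. Second, the claim that $\tfrac12(\lambda_1+1)=\ell(\gamma)$ ``forces $\lambda_k=1$'' is false --- that identity holds for every self-conjugate partition and says nothing about the smallest part (indeed the staircase has $\lambda_k=3$ when $k$ is even), so ruling out $\lambda_k\ge 5$ still requires an actual argument with the $i>k$ formula, which you do not supply. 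The paper's converse avoids all of this with a two-line direct argument on the Young diagram: if some row exceeds the next by at least $2$, or if two consecutive rows are equal, one exhibits a box of hook length exactly $2$; hence consecutive parts differ by exactly $1$.
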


\begin{proof}
Suppose $\gamma=(k,k-1,k-2,\dots,3,2,1)$ is a perfectly triangular partition.  Then $\gamma$ is self-conjugate by \cite[Lemma 2.4]{Rebecca}.  Therefore, $\gamma$ corresponds to a partition $\lambda$ into distinct odd parts.  Since $h_\gamma(i,i)=\lambda_i$ for all $1\leq i\leq\ell(\lambda)$, the largest part of $\lambda$ is $\lambda_1=2k-1$.  The second largest part of $\lambda$ must be $\lambda_2=\lambda_1-4=2k-5$, since the hook with corner $(2,2)$ must fit inside the hook with corner $(1,1)$ in order for the parts of $\gamma$ to be non-increasing (meaning $h_\gamma(2,2)\leq h_\gamma(1,1)-2$) and must be exactly one additional box shorter in each leg in order for $\gamma$ to be perfectly triangular (meaning $h_\gamma(2,2)=h_\gamma(1,1)-4$).  By the same reasoning, each part of $\lambda$ must be four less than the previous part, so $\lambda$ is given by $$\lambda=\big(2k-1,2(k-2)-1,2(k-4)-1,\dots\big).$$  Note that if $k=2m-1$ is odd, then $2k-1=4m-3\equiv1\pmod{4}$, and so $$\ell(\lambda)=\left\lfloor\frac{2k-1}{4}\right\rfloor=\left\lfloor m-\frac{3}{4}\right\rfloor=m-1=\frac{k-1}{2}$$ and the smallest part of $\lambda$ is 1.  On the other hand, if $k=2m$ is even, then $2k-1=4m-1\equiv3\pmod{4}$, and so $$\ell(\lambda)=\left\lfloor\frac{2k-1}{4}\right\rfloor=\left\lfloor m-\frac{1}{4}\right\rfloor =m-1=\frac{k}{2}-1$$ and the smallest part of $\lambda$ is 3.  In either case, we use Theorem \ref{hook_thm} to verify that the hook lengths of $\gamma$ are odd.  The values of $\frac{1}{2}\left(\lambda_i+\lambda_j\right)$ for all pairs $(i,j)$, $1\leq j\leq i\leq\ell(\lambda)$, are of the form $$\frac{1}{2}\bigg(\big(2(k-2\ell)-1\big)+\big(2(k-2s)-1\big)\bigg)=\frac{1}{2}(4k-4\ell-4s-2)=2(k-\ell-s)-1$$ for integers $\ell,s\geq0$.  Therefore, the hook lengths within the Durfee square are all odd.  Now, consider the boxes in the Young diagram below the Durfee square.  These are the parts of $\gamma$ whose indices are at least $\ell(\lambda)=\frac{k-1}{2}$ if $k$ is odd (resp. $\ell(\lambda)=\frac{k}{2}-1$ if $k$ is even) and at most $\ell(\gamma)=k$.  These parts form a sub-partition (see \cite{Rebecca,S} for more details on sub-partitions) $\widehat{\gamma}$ of $\gamma$ of the form $$\widehat{\gamma}=\left(\left\lfloor\frac{k}{2}\right\rfloor,\left\lfloor\frac{k}{2}\right\rfloor-1,\dots,3,2,1\right),$$ which is itself a perfectly triangular partition.  We continue this process inductively, eliminating Durfee squares of successive sub-partitions until all that remains of $\gamma$ is the sub-partition $(1)$, whose one box has hook length 1.  Thus, all of the hook lengths of $\gamma$ are odd.

Conversely, suppose $\gamma$ is $2$-core.  Then each part of $\gamma$ must be at most one more than the next part, or else the second to last box in the row corresponding to the larger part will have a hook length of 2.  Each part of $\gamma$ must also be at least one more than the next part, because if at least two rows have the same number of boxes, then the last box in the second to last such row will have a hook length of 2.  Therefore, each part of $\gamma$ must be exactly one more than the next part, so $\gamma$ must be perfectly triangular.
\end{proof}

\begin{remark}
The above proof of Theorem \ref{2core_thm} uses techniques resulting from the connection between self-conjugate $t$-cores and partitions into distinct odd parts to emphasize their usefulness.  This particular proof can be made much simpler without using these techniques.
\end{remark}

In particular, there is a $2$-core partition of size $n$ if and only if $n$ is a triangular number, and in this case the $2$-core partition is unique: it is the perfectly triangular (self-conjugate) partition of size $n$.  If $\mathrm{sc}_t(n)$ denotes the number of self-conjugate $t$-core partitions of $n$, then we have that
\begin{equation}\label{2core_count}
\mathrm{sc}_2(n)=\begin{cases}
1,&\mbox{if }n=k(k+1)/2\text{ for some }k\geq1,\\
0,&\mbox{otherwise}.
\end{cases}
\end{equation}

Next, we turn to self-conjugate $3$-core partitions, which have been studied by Robbins \cite{Robbins}.

\begin{theorem}[Robbins, Theorem 7 of \cite{Robbins}]\label{3core_thm1}
There is a self-conjugate $3$-core partition of size $n$ if and only if there exists $r\geq1$ such that $n=r(3r\pm2)$.  If such a self-conjugate $3$-core partition exists, then it is unique.
\end{theorem}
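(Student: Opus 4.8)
Following the approach used above for the $2$-core case, the plan is to push the problem through the bijection \eqref{bijection} to the associated partition $\lambda$ into distinct odd parts and then apply Corollary \ref{corollary} and the preliminary proposition. The claim to prove is that \emph{$\gamma$ is a self-conjugate $3$-core if and only if $\lambda$ is an arithmetic progression of common difference $6$ whose smallest part is $1$ or $5$}; the size formula and the uniqueness then follow from elementary arithmetic.

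For necessity, suppose $\gamma$ is a self-conjugate $3$-core. The diagonal cases $i = j$ of the first family in Corollary \ref{corollary} force $\lambda_i \not\equiv 0 \pmod 3$ for all $i$, and the off-diagonal cases then force all parts into one residue class modulo $3$ (else $\lambda_i \equiv 1$, $\lambda_j \equiv 2 \pmod 3$ gives $\tfrac12(\lambda_i+\lambda_j) \equiv 0$). Since the parts are odd and distinct, either all are $\equiv 1 \pmod 6$ or all are $\equiv 5 \pmod 6$, so consecutive parts differ by a positive multiple of $6$; the preliminary proposition with $t = 3$ rules out any gap $\ge 8$, so $\lambda_i - \lambda_{i+1} = 6$ throughout and $\lambda$ is an arithmetic progression of common difference $6$, determined by its length $k$ and smallest part $b := \lambda_k$, with $b \in \{1,5\}$ or $b \ge 7$. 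To exclude $b \ge 7$, write $L := \ell(\gamma) = \tfrac12(\lambda_1+1)$; using that the Durfee square of $\gamma$ has side $k$, one checks that $b \ge 7$ forces $L > 3k$, hence the box $(L-2k,\,1)$ lies strictly below the Durfee square, and the $i > k$ case of Theorem \ref{hook_thm} evaluates its hook length to exactly $3k$. So $b \ge 7$ makes $\gamma$ not $3$-core, and therefore $b \in \{1,5\}$.

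For sufficiency I would check both families of Corollary \ref{corollary} directly for $\lambda = (b+6(k-1),\,b+6(k-2),\,\dots,\,b)$ with $b \in \{1,5\}$. The first family is immediate, since $\tfrac12(\lambda_i+\lambda_j) = b + 3(2k-i-j) \equiv b \not\equiv 0 \pmod 3$. For the second family, the substitution $\lambda_m = b + 6(k-m)$ turns the set $\{\,j \le m \le k : \lambda_m \ge 2i-2m+1\,\}$ into an explicit interval of integers, and the quantity in Corollary \ref{corollary}, reduced modulo $3$, becomes a short expression in $i$ together with one floor term; a two-case split on the parity of $L-i$ shows it is always $\equiv 1$ or $2 \pmod 3$, so $\gamma$ is $3$-core. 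Finally $n = |\lambda| = \sum_{j=1}^{k}\bigl(b+6(k-j)\bigr) = k(b+3k-3)$, which is $k(3k-2)$ for $b=1$ and $k(3k+2)$ for $b=5$; hence a self-conjugate $3$-core of size $n$ exists exactly when $n = r(3r-2)$ or $n = r(3r+2)$ for some $r \ge 1$, i.e. $n = r(3r\pm2)$. For uniqueness, $k \mapsto k(b+3k-3)$ is strictly increasing in $k$ for each fixed $b$, and $k(3k-2) = k'(3k'+2)$ has no solution with $k,k' \ge 1$ (clear denominators and complete the square to get $(6k-2)^2 = (6k'+2)^2$); so $n$ determines the pair $(k,b)$, hence $\lambda$, hence $\gamma$.

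The step I expect to be the main obstacle is the necessity argument that $b \ge 7$ produces a hook length divisible by $3$: it requires a precise enough description of the rows of $\gamma$ below the Durfee square to pinpoint the box $(L-2k,1)$ and evaluate its hook length, and one must notice that this ``bad'' box sits exactly in the band of rows that degenerates away when $b \in \{1,5\}$. The sufficiency verifications are routine but involve some bookkeeping with floor functions and parities.
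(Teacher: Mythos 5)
Your proof is correct, but note that the paper does not actually prove this statement: it is quoted from Robbins \cite{Robbins}, whose argument is constructive and produces the explicit partitions recorded in Theorem \ref{3core_thm2}. Your route is genuinely different and is internal to this paper's machinery: you characterize the self-conjugate $3$-cores through the bijection \eqref{bijection} as exactly those $\gamma$ whose partner $\lambda$ is an arithmetic progression of common difference $6$ with least part $1$ or $5$, using the diagonal and off-diagonal cases of the first family in Corollary \ref{corollary} to pin down the residue class, the preliminary proposition with $t=3$ to cap the gaps at $6$, and a single well-chosen box below the Durfee square to kill $b\ge 7$. I checked the two places where you only sketch: (i) for $b\ge 7$ the box $(L-2k,1)$ with $L=\tfrac12(\lambda_1+1)=\tfrac{b-5}{2}+3k>3k$ does satisfy $\lambda_m\ge 2i-2m+1$ for all $1\le m\le k$, so Theorem \ref{hook_thm} gives $h_\gamma(L-2k,1)=L-(L-2k)+k=3k$; and (ii) in the sufficiency verification the second-family quantity reduces, with $s:=i-k\ge 1$ and $M=\lfloor\tfrac{b+6k-2i-1}{4}\rfloor\le k$, to $\tfrac{b+1}{2}+3k-3j-i+M\equiv 1-\lceil 3s/2\rceil\pmod 3$ for both $b=1$ and $b=5$, which is $1$ for even $s$ and $2$ for odd $s$, never $0$; the residue is independent of $j$ because the $j$-dependence is exactly $-3j$. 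The size computation $n=k(b+3k-3)$ and the uniqueness argument via $(6k-2)^2=(6k'+2)^2$ are also fine. What your approach buys is a self-contained proof from Theorem \ref{hook_thm} alone, and as a byproduct it recovers the explicit description of Theorem \ref{3core_thm2} (the $\lambda$ of the partitions $\gamma_\pm$ are exactly these progressions); what Robbins's construction buys is directness, since he simply exhibits $\gamma_\pm$ and verifies they are the only candidates.
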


The proof of Theorem 7 in \cite{Robbins} is constructive and provides the following nice, closed formula for the parts of the unique self-conjugate $3$-core partitions of size $n=r(3r\pm2)$, $r\geq1$.  For notational convenience, we use the frequency notation $\gamma=\left\langle1^{m_1},2^{m_2},3^{m_3},\dots\right\rangle$ for the partition $\gamma$, where $m_i$ is the multiplicity of the part $i$ in $\gamma$.

\begin{theorem}[Robbins, \cite{Robbins}]\label{3core_thm2}
The unique self-conjugate $3$-core partition of $n=r(3r-2)$, $r\geq1$, is $$\gamma_-=\left\langle1^2,2^2,\dots,(r-2)^2,(r-1)^2,r,r+2,\dots,3r-4,3r-2\right\rangle.$$  The unique self-conjugate $3$-core partition of $n=r(3r+2)$, $r\geq1$, is $$\gamma_+=\left\langle1^2,2^2,\dots,(r-2)^2,(r-1)^2,r^2,r+2,\dots,3r-2,3r\right\rangle.$$
\end{theorem}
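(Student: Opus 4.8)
The plan is to verify that the partitions $\gamma_-$ and $\gamma_+$ written in the statement are self-conjugate $3$-core partitions of sizes $n=r(3r-2)$ and $n=r(3r+2)$ respectively; since Theorem \ref{3core_thm1} already guarantees uniqueness whenever such a partition exists, this suffices. The approach I would take is to first identify the partitions into distinct odd parts attached to $\gamma_\pm$ by the bijection \eqref{bijection}, and then feed those into Corollary \ref{corollary}. Concretely, set $\lambda^{(-)}:=\big(6(r-1)+1,\,6(r-2)+1,\,\dots,\,7,\,1\big)$ and $\lambda^{(+)}:=\big(6(r-1)+5,\,6(r-2)+5,\,\dots,\,11,\,5\big)$, each a strictly decreasing sequence of $r$ odd numbers, with $|\lambda^{(-)}|=\sum_{i=1}^{r}\big(6(r-i)+1\big)=3r(r-1)+r=r(3r-2)$ and similarly $|\lambda^{(+)}|=3r(r-1)+5r=r(3r+2)$. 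The crucial feature to record here is that every part of $\lambda^{(-)}$ is $\equiv 1\pmod 6$ and every part of $\lambda^{(+)}$ is $\equiv 5\pmod 6$.

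Next I would let $\gamma^{(\mp)}$ be the self-conjugate partition corresponding to $\lambda^{(\mp)}$ under \eqref{bijection} and reconstruct its Young diagram hook by hook. Its Durfee square has side length $r$ (the hook with corner $(i,i)$ has total length $\lambda^{(\mp)}_i$, so its arm has $\tfrac12(\lambda^{(\mp)}_i-1)$ boxes), so the rows inside the Durfee square have lengths $(\gamma^{(\mp)})_i=i+\tfrac12(\lambda^{(\mp)}_i-1)$ for $1\le i\le r$, which evaluates to $3r-2i$ (resp. $3r-2i+2$), while the rows below have lengths $(\gamma^{(\mp)})_i=\#\{1\le j\le r:\lambda^{(\mp)}_j\ge 2i-2j+1\}$ for $i>r$, which evaluates to $\lfloor(3r-i)/2\rfloor$ (resp. $\lfloor(3r+2-i)/2\rfloor$). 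Translating these row lengths into frequency notation reproduces precisely Robbins' $\gamma_-$ and $\gamma_+$; in particular $\gamma_\pm$ is self-conjugate with corresponding partition into distinct odd parts $\lambda^{(\pm)}$. (Alternatively one can bypass the bijection, verify self-conjugacy of $\gamma_\pm$ directly by checking $\#\{j:(\gamma_\pm)_j\ge i\}=(\gamma_\pm)_i$ for all $i$, and then read $\lambda^{(\pm)}$ off the main diagonal via $h_{\gamma_\pm}(i,i)=2(\gamma_\pm)_i-2i+1$.)

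It then remains to apply Corollary \ref{corollary} with $k=r$. For the Durfee-square condition, $\tfrac12(\lambda_i+\lambda_j)\equiv 1\pmod 3$ for every admissible pair in the $\gamma_-$ case and $\equiv 2\pmod 3$ in the $\gamma_+$ case, by the mod-$6$ observation, so those hook lengths are never divisible by $3$. For the condition outside the Durfee square, $\tfrac12(\lambda_j+1)\equiv 1\pmod 3$ (resp. $\equiv 0\pmod 3$), and under the constraint $\lambda_j\ge 2i-2j+1$ one computes $\#\{j\le m\le r:\lambda_m\ge 2i-2m+1\}=M_i-j+1$ with $M_i=\lfloor(3r-i)/2\rfloor$ (resp. $\lfloor(3r+2-i)/2\rfloor$); the $j$-dependence then cancels and the expression in Corollary \ref{corollary} reduces modulo $3$ to $M_i-i+1$ (resp. $M_i-i$). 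Writing $i=r+s$ over the admissible range and splitting on the parity of $s$, both quantities equal $1-s-\lceil s/2\rceil$, which is $\equiv 1\pmod 3$ for $s$ even and $\equiv 2\pmod 3$ for $s$ odd, never $0$. Hence $\gamma_\pm$ is $3$-core, which completes the proof.

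The step I expect to be the main obstacle is the bookkeeping in this last paragraph: expressing the count $\#\{j\le m\le r:\lambda_m\ge 2i-2m+1\}$ as a single floor function over the admissible range $r<i\le \tfrac12(\lambda_1+1)=\ell(\gamma_\pm)$ while correctly respecting the constraint $\lambda_j\ge 2i-2j+1$, and then tracking the reduction modulo $3$ through $M_i$ as the parity of $i-r$ changes. I would also check the degenerate small-$r$ cases (notably $r=1$, where one of the repeated blocks of $\gamma_\pm$ is empty) separately, though these are dispatched immediately by hand.
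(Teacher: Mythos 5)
Your argument is correct, but it is genuinely different from what the paper does: the paper offers no proof of this statement at all, simply importing it from Robbins' constructive proof of his Theorem~7 (the paper explicitly says the formula for the parts of $\gamma_\pm$ is read off from that construction). You instead give a verification-style proof using the paper's own machinery --- identifying the distinct-odd-parts partners $\lambda^{(-)}$ with parts $\equiv 1\pmod 6$ and $\lambda^{(+)}$ with parts $\equiv 5\pmod 6$, checking sizes, reconstructing the row lengths of $\gamma_\pm$ to confirm the frequency notation, and then running Corollary~\ref{corollary} modulo $3$ --- with uniqueness outsourced to Theorem~\ref{3core_thm1}. I checked the computations: the Durfee-square hooks are $\equiv 1$ resp.\ $2\pmod 3$ by the mod-$6$ observation; the count $\#\{j\le m\le r:\lambda_m\ge 2i-2m+1\}$ does equal $M_i-j+1$ with $M_i=\lfloor(3r-i)/2\rfloor$ resp.\ $\lfloor(3r+2-i)/2\rfloor$ because the defining inequality is equivalent to $m\le M_i$ and the admissibility constraint $\lambda_j\ge 2i-2j+1$ is exactly $j\le M_i$; the residual $j$-dependence through $\tfrac12(\lambda_j+1)$ vanishes mod $3$; and with $i=r+s$ both cases reduce to $1-s-\lceil s/2\rceil\in\{1,2\}\pmod 3$. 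So the proof is valid within the paper's framework. The trade-off: Robbins' route establishes existence, uniqueness, and the explicit form all at once by construction, whereas your route proves only the identification of $\gamma_\pm$ (still leaning on Robbins for existence/uniqueness) but has the merit of exercising Theorem~\ref{hook_thm} and Corollary~\ref{corollary} directly, in the same spirit as the paper's reproof of Theorem~\ref{2core_thm}. Your flagged worry about the bookkeeping is where the real content lies, but as written it goes through; the degenerate case $r=1$ is indeed immediate.
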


Theorem \ref{3core_thm1} counts the number of self-conjugate $3$-core partitions of $n$:
\begin{equation}\label{3core_count}
\mathrm{sc}_3(n)=\begin{cases}
1,&\mbox{if }n=r(3r\pm2)\text{ for some }r\geq1,\\
0,&\mbox{otherwise}.
\end{cases}
\end{equation}

Counting self-conjugate $t$-core partitions of an integer $n$ becomes more difficult as $t$ gets larger, so there are very few explicit expressions for $\mathrm{sc}_t(n)$ for larger $t$.  The number of self-conjugate $7$-core partitions has also been studied by Ono and Raji, and by Bringmann, Kane, and Males, because of its interesting relationship to Hurwitz class numbers.  For a discriminant $-D<0$, the \emph{Hurwitz class number} $H(-D)$ is equal to the number of equivalence classes in $\mathrm{SL}_2(\mathbb{Z})$ of integral, binary quadratic forms of discriminant $-D$, weighted by $1/2$ the order of their automorphism group, and $H(x)=0$ if $x\in\mathbb{Q}\setminus\mathbb{Z}$ (this follows the definitions in \cite{BKM,OR}; for more details, see \cite{Cox}, for example).

The following theorem gives a formula for the number of self-conjugate $7$-core partitions of certain odd integers in terms of Hurwitz class numbers.

\begin{theorem}[Ono--Raji, Theorem 1 of \cite{OR}]\label{7core_thm1}
If $n\not\equiv5\pmod{7}$ is a positive odd integer, then the number of self-conjugate $7$-core partitions of size $n$ is given by
\begin{equation}\label{7core_count1}
\mathrm{sc}_7(n)=\begin{cases}
\tfrac{1}{4}H(-28n-56),&\mbox{if }n\equiv1\pmod{4},\\
\tfrac{1}{2}H(-7n-14),&\mbox{if }n\equiv3\pmod{8},\\
0,&\mbox{if }n\equiv7\pmod{8}.
\end{cases}
\end{equation}
\end{theorem}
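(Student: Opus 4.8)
The plan is to combine the classical eta-quotient generating function for self-conjugate $7$-core partitions with the theory of weight $3/2$ modular forms and Gauss's class number formula for ternary quadratic forms; this is the route of Ono and Raji, and since the statement is quoted from \cite{OR} what follows is a sketch of the underlying argument rather than a new proof.

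First I would recall $\sum_{n\ge 0}\mathrm{sc}_7(n)q^n$ as an eta-quotient, namely the specialization to $t=7$ of the Garvan--Kim--Stanton product formula for self-conjugate $t$-cores with $t$ odd. Using the abacus/beta-set description of $7$-cores and completing the square in the resulting linear-in-parts size function, one rewrites this — after a change of variable $q\mapsto q^c$ and multiplication by a suitable power of $q$ — as the theta series $\Theta_Q(q)=\sum_{\vec v\in L}q^{Q(\vec v)}$ of an explicit positive-definite ternary quadratic form $Q$ summed over a coset of a sublattice $L\subset\mathbb Z^3$. Concretely this yields a bijection between self-conjugate $7$-core partitions of $n$ and integer triples, subject to congruence conditions, representing $28n+56$ when $n\equiv 1\pmod 4$ and $7n+14$ when $n\equiv 3\pmod 8$; note that in both cases the represented integer is $7(n+2)$ up to the factor $4$, the factor $4$ appearing precisely because $-7(n+2)$ fails to be a valid discriminant when $n\equiv 1\pmod 4$ (it is $\equiv 3\pmod 4$).

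Next, $\Theta_Q$ is a weight $3/2$ modular form on some $\Gamma_0(N)$, and I would decompose it into its Eisenstein and cuspidal components. By Siegel's formula the Eisenstein part has Fourier coefficients given by a product of local densities of $Q$, and Gauss's classical evaluation of these densities for ternary forms rewrites them in terms of Hurwitz class numbers $H(-D)$; the factors $\tfrac14$ and $\tfrac12$ in the final formula fall out of this computation, specifically from the order of the automorphism group of $Q$ and from the $H(4m)$-versus-$H(m)$ relations in Gauss's identity. One must then show the cuspidal part contributes nothing on the relevant progressions: the cleanest way is to verify that the genus of $Q$ consists of a single class, so $\Theta_Q$ is purely Eisenstein, and otherwise to argue vanishing of the relevant Shimura-lift newform coefficients on $n\equiv 1\pmod 4$ and $n\equiv 3\pmod 8$. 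Finally, the case $n\equiv 7\pmod 8$ is handled by exhibiting a $2$-adic obstruction: $7(n+2)\equiv 7\pmod 8$ is not locally represented by $Q$ at $2$, forcing $\mathrm{sc}_7(n)=0$, while the hypothesis $n\not\equiv 5\pmod 7$ removes the residue class at $7$ (there $7\mid n+2$, so $49\mid 7(n+2)$, and the local factor at $7$ acquires an extra term, breaking the naive formula).

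The hard part will be controlling the cuspidal component: showing equivalently that the genus of $Q$ has class number one, or that the cusp forms appearing have Fourier coefficients vanishing on the two progressions in question. Everything else — the eta-quotient identity, the change of variables, the local density computation producing the class-number formula with its constants, and the $2$-adic vanishing — is a finite, if delicate, bookkeeping exercise. An alternative to the ternary-forms route is to identify the completed generating function directly with a linear combination of Zagier's weight $3/2$ Eisenstein series $\mathcal H$ and unary theta functions on $\Gamma_0(28)$, reading off the class-number formula from the Fourier expansion of $\mathcal H$; this trades the single-genus verification for an identity of modular forms checked on finitely many coefficients via Sturm's bound.
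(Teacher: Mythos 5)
This theorem is imported verbatim from Ono--Raji \cite{OR}; the paper you are reading gives no proof of it, so there is no internal argument to compare your sketch against. Judged on its own terms, your outline is a faithful high-level account of the strategy actually used in \cite{OR}: the Garvan--Kim--Stanton eta-quotient for $\sum\mathrm{sc}_7(n)q^{n}$, the shift by $q^{2}$ (coming from $(t^2-1)/24=2$) to land in a space of weight $3/2$ forms, the comparison with Hurwitz class numbers via the Eisenstein part, and the need to kill the cuspidal contribution. Your arithmetic bookkeeping is also consistent: $28n+56=4\cdot7(n+2)$ and $7n+14=7(n+2)$; for $n\equiv1\pmod4$ one has $-(7n+14)\equiv3\pmod4$, which is not a discriminant, explaining the passage to $-28n-56$; for $n\equiv7\pmod8$ one has $7(n+2)\equiv7\pmod8$, the classical three-squares obstruction; and $n\equiv5\pmod7$ forces $49\mid7(n+2)$, which is exactly why that residue class is excluded.

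The genuine gap is the one you flag yourself: everything hinges on showing that the cuspidal component of the relevant weight $3/2$ form contributes nothing on the progressions $n\equiv1\pmod4$ and $n\equiv3\pmod8$ (equivalently, that the ternary form lies in a one-class genus, or that the cusp space is spanned by unary theta functions supported off these progressions). Without that verification the argument establishes only that $\mathrm{sc}_7(n)$ agrees with the stated class-number expression up to coefficients of a cusp form, which is strictly weaker than the theorem. Your proposed fallback --- identifying the completed generating function with a combination of Zagier's $\mathcal{H}$ and unary theta series on $\Gamma_0(28)$ and checking finitely many coefficients via Sturm's bound --- would close the gap, but as written it is a plan rather than a proof: the specific eta-quotient, the level, the Sturm bound, and the finite coefficient check are all left unexecuted. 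So the proposal should be regarded as a correct roadmap to the Ono--Raji proof, not a self-contained demonstration.
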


The next result expands Theorem \ref{7core_thm1} to count the number of self-conjugate $7$-core partitions of any integer $n$ as a linear combination of Hurwitz class numbers.

\begin{theorem}[Bringmann--Kane--Males, Theorem 1.3 of \cite{BKM}]\label{7core_thm2}
For every positive integer $n$, the number of self-conjugate $7$-core partitions of size $n$ is given by
\begin{equation}\label{7core_count2}
\mathrm{sc}_7(n)=\frac{1}{4}\left(H(-28n-56)-H\left(\frac{-4n-8}{7}\right)-2H(-7n-14)+2H\left(\frac{-n-2}{7}\right)\right).
\end{equation}
\end{theorem}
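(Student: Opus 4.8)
The plan is to recast the count $\mathrm{sc}_7(n)$ as the Fourier coefficients of an explicit weight $3/2$ modular form and then to pin that form down inside the Eisenstein subspace generated by Zagier's Hurwitz class number series; this is the modular forms strategy used by Ono and Raji \cite{OR}, pushed through for all $n$ by Bringmann, Kane, and Males \cite{BKM}. First I would start from the product formula of Garvan, Kim, and Stanton (see, e.g., \cite{Garvan}) for the generating function $\sum_{n\ge 0}\mathrm{sc}_t(n)q^n$ with $t$ odd; for $t=7$ this exhibits it as an explicit eta--quotient, a finite product of factors of the form $(1\pm q^{a})$ and $(1-q^{b})$ over arithmetic progressions modulo $14$. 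An application of the Jacobi triple product rewrites this eta--quotient as a ternary theta series $\sum_{\mathbf v\in\mathbb Z^{3}}q^{Q(\mathbf v)}$ for an explicit positive--definite quadratic polynomial $Q$, so that $\mathrm{sc}_7(n)$ becomes a fixed multiple of the number of representations of $28n+56$ by a fixed genus of ternary quadratic forms. In particular,
$$\Theta(\tau):=\sum_{n\ge 1}\mathrm{sc}_7(n)\,q^{\,28n+56},\qquad q=e^{2\pi i\tau},$$
is a holomorphic modular form of weight $3/2$ on $\Gamma_0(N)$ with quadratic nebentypus $\chi$, for an explicit level $N$ divisible by $28$ read off from the eta--quotient. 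This passage from a combinatorial quantity to an arithmetic one is the main conceptual step.

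Next I would set up the right--hand side as a modular form of the same weight and level. Recall that the Hurwitz class numbers are the holomorphic Fourier coefficients of Zagier's weight $3/2$ harmonic Eisenstein series on $\Gamma_0(4)$, whose holomorphic part is $\mathcal H(\tau)=-\tfrac1{12}+\sum_{D>0}H(-D)q^{D}$ and whose shadow is a multiple of $\theta(\tau)=\sum_{m\in\mathbb Z}q^{m^{2}}$. After the substitution $q\mapsto q^{28n+56}$, the four sequences $H(-28n-56)$, $H\!\big(\tfrac{-4n-8}{7}\big)$, $H(-7n-14)$, $H\!\big(\tfrac{-n-2}{7}\big)$ are precisely the Fourier coefficients of the four forms obtained from $\mathcal H$ by suitable combinations of the level--raising operator $V_d\colon f(\tau)\mapsto f(d\tau)$ and the coefficient--sieving operator $U_d$ for $d\mid 28$ (with $H$ of a non--integer equal to $0$). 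Individually each is only the holomorphic part of a harmonic Maass form; but in the specific combination $\tfrac14(1,-1,-2,2)$ the shadows cancel, which one checks from the explicit shape of $\theta$ and the action of $U_d$ and $V_d$ on it. Hence the right--hand side
$$R(\tau):=\tfrac14\sum_{n\ge1}\Big(H(-28n-56)-H\!\big(\tfrac{-4n-8}{7}\big)-2H(-7n-14)+2H\!\big(\tfrac{-n-2}{7}\big)\Big)q^{\,28n+56}$$
is a genuine holomorphic modular form of weight $3/2$ on $\Gamma_0(\operatorname{lcm}(N,28))$ with matching character.

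Once $\Theta$ and $R$ are both exhibited as elements of one finite--dimensional space of holomorphic weight $3/2$ forms, the identity $\Theta=R$ --- which is exactly the statement of the theorem --- follows from the Sturm bound: it suffices to check that the $q$--expansions agree for all exponents below an explicit constant, a finite computation. Alternatively, and more structurally, one can compute directly: being essentially a genus theta series, $\Theta$ has trivial (or easily computed, and here vanishing) cuspidal projection by Siegel--Weil, so it lies in the Eisenstein subspace, which at level $28$ in weight $3/2$ is spanned by the translates of $\mathcal H$ above; matching the constant terms at the cusps then pins down the coefficients $\tfrac14,-\tfrac14,-\tfrac12,\tfrac12$. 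Finally, compatibility with Ono--Raji's Theorem \ref{7core_thm1} on the progressions $n\equiv 1\pmod 4$ and $n\equiv 3,7\pmod 8$ is obtained by applying the classical identities relating $H(-4D)$ to $H(-D)$ (together with $H(-D)=0$ for $D\equiv 1,2\pmod 4$), which collapse the four--term formula to the one-- and two--term formulas of \cite{OR} on those classes.

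The step I expect to be the main obstacle is the half--integral weight bookkeeping: determining the precise level $N$ and nebentypus of $\Theta$ from the Garvan--Kim--Stanton eta--quotient, verifying that $\Theta$ really does land in the Eisenstein subspace (i.e. that the relevant cusp form space $S_{3/2}(\Gamma_0(28),\chi)$ contributes nothing to it), and --- on the Hurwitz side --- establishing the exact transformation and Hecke behaviour of $\mathcal H$ under $U_4$, $U_7$, and $V_7$ so that the shadows in $R$ cancel. Granting those technical inputs, the remainder is a finite coefficient comparison against the Sturm bound.
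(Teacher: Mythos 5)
This statement is quoted in the paper as a known result of Bringmann--Kane--Males \cite{BKM}; the paper itself gives no proof, so there is no internal argument to compare against. Your proposal is a faithful reconstruction of the strategy actually used in \cite{OR} and \cite{BKM}: pass from $\mathrm{sc}_7(n)$ to a ternary theta function of weight $3/2$ via the Garvan--Kim--Stanton description of self-conjugate $t$-cores, and identify it with a combination of images of Zagier's Eisenstein series under $U_d$ and $V_d$. Two points deserve caution. First, your appeal to Siegel--Weil to conclude that the cuspidal projection of $\Theta$ vanishes is not automatic for a single form; it holds only when the relevant genus has one class or when the particular theta series is shown directly to be Eisenstein, and this is exactly the kind of verification that cannot be waved through. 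Second, the published route is more elementary than your Sturm-bound comparison: Ono--Raji reduce $\mathrm{sc}_7(n)$ to counting representations of $28n+56$ as $a^2+b^2+c^2$ subject to congruence conditions modulo $7$, and Bringmann--Kane--Males then obtain the four-term combination by an inclusion--exclusion over the mod-$7$ conditions combined with Gauss's classical identities expressing $r_3(m)$ in terms of $H(-4m)$ and $H(-m)$; the coefficients $\tfrac14,-\tfrac14,-\tfrac12,\tfrac12$ fall out of that sieve rather than from matching constant terms at cusps. Your outline would work granted the technical inputs you list, but as written it defers precisely the steps (level, nebentypus, shadow cancellation, Eisenstein-ness of $\Theta$) that constitute the proof, so it should be regarded as a correct plan rather than a proof.
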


All of these values of $\mathrm{sc}_t(n)$ for $t=2,3,7$ will lead to results on the cardinalities of certain subsets of natural numbers in Section \ref{sec_Hurwitz}, although the subsets for $t=2,3$ are trivial.

%%%%%%%%%%%%%%%%%%%%%%%%%%%%%

\section{Proof of Theorem \ref{hook_thm}}\label{sec_proofs}

Suppose $\gamma$ is a self-conjugate partition and $\lambda=\left(\lambda_1,\lambda_2,\dots,\lambda_k\right)$ is the corresponding partition into distinct odd parts.  In particular, since $h_\gamma(i,i)=\lambda_i$ for all $1\leq i\leq k$, each leg of the hook with corner located at the main diagonal box $(i,i)$ contains $\frac{1}{2}\left(\lambda_i-1\right)$ boxes, excluding the corner box.  We will use this observation repeatedly to prove the hook length formula for $h_\gamma(i,j)$ for all boxes $(i,j)$ in the Young diagram of $\gamma$.  Throughout the proof, we assume without loss of generality (by symmetry) that the row $i$ is greater than or equal to the column $j$.

First, fix a row $i$ and a column $j$ in the Durfee square of the Young diagram of $\gamma$, so that $1\leq j\leq i\leq k$.  The total number of boxes in the $j$th column is $\frac{1}{2}\left(\lambda_j+1\right)+j-1$, so the number of boxes in the $j$th column below the box $(i,j)$ is $\frac{1}{2}\left(\lambda_j+1\right)+j-1-i$.  The number of boxes to the right of the box $(i,j)$ is the same as the number of boxes below the box $(j,i)$ by symmetry, which is $\frac{1}{2}\left(\lambda_i+1\right)+i-1-j$.  Combining these two legs and adding 1 to include the box at $(i,j)$ itself, we have that the hook lengths within the Durfee square satisfy
\begin{align*}
h_\gamma(i,j)&=\left(\frac{1}{2}\left(\lambda_j+1\right)+j-1-i\right)+\left(\frac{1}{2}\left(\lambda_i+1\right)+i-1-j\right)+1\\
&=\frac{1}{2}\left(\lambda_i+\lambda_j\right).
\end{align*}
This completes the proof of the formula for $h_\gamma(i,j)$, $i\leq k$.

Now, fix a row $i$ and a column $j$ below the Durfee square of the Young diagram of $\gamma$, so that $1\leq j\leq k<i$.  The number of boxes below the box $(i,j)$ is still $\frac{1}{2}\left(\lambda_j+1\right)+j-1-i$.  The number of boxes at and to the right of the box $(i,j)$ is equal to the number of columns at and to the right of column $j$ which contain at least $i$ boxes.  Since the $m$th column contains $\frac{1}{2}\left(\lambda_m+1\right)+m-1$ boxes for $1\leq m\leq k$, the $m$th column contains at least $i$ boxes if $\frac{1}{2}\left(\lambda_m+1\right)+m-1\geq i$.  We simplify this requirement as follows:
\begin{align}
\frac{1}{2}\left(\lambda_m+1\right)+m-1&\geq i\nonumber\\
\lambda_m+1+2m-2&\geq2i\nonumber\\
\lambda_m&\geq 1+2i-2m.\label{iboxes}
\end{align}
Therefore, the number of boxes at and to the right of the box $(i,j)$ is equal to the number of integers $m$, $j\leq m\leq k$, such that $\lambda_m\geq 1+2i-2m$.  All together, we have that the hook lengths below the Durfee square satisfy $$h_\gamma(i,j)=\frac{1}{2}\left(\lambda_j+1\right)+j-1-i+\#\left\{j\leq m\leq k:\lambda_m\geq 1+2i-2m\right\}.$$  This completes the proof of the formula for $h_\gamma(i,j)$, $i>k$.  The formula for $h_\gamma(j,i)$ follows by symmetry.\hfill$\square$

\section{Applications}\label{sec_Hurwitz}

Most of what number theorists know about partitions at this point in time arises from studying their additive properties, including the \emph{partition function} $p(n)$, which counts the number of partitions of size $n$, and its generating function.  It is also possible to study partitions from a multiplicative standpoint, however.  Recent work by the first author, Just, and Schneider \cite{DJS} introduces a multiplicative partition statistic called the \emph{supernorm} of a partition, denoted by $\widehat{N}(\lambda)$ for a partition $\lambda$.  The supernorm map $\widehat{N}$ from the set of all partitions to the set of all positive integers is defined as follows: $$\widehat{N}:\lambda=\left\langle1^{m_1},2^{m_2},3^{m_3},\dots,k^{m_k}\right\rangle\mapsto\prod_{i=1}^kp_i^{m_i},$$ where $p_i$ is the $i$th prime ($p_1=2$, $p_2=3$, $p_3=5$,\dots).  In other words, $\widehat{N}$ maps each partition $\lambda$ to the unique positive integer whose prime factors are indexed by the parts of $\lambda$.  Uniqueness of prime factorizations of positive integers makes this map a bijection.  For example, applying the supernorm map to the partition $$\lambda=(6,5,3,3,2,1,1,1)$$ yields the positive integer $$\widehat{N}(\lambda)=p_1^3\,p_2\,p_3^2\,p_5\,p_6=2^3\cdot3\cdot5^2\cdot11\cdot13=85800,$$ and no other partition corresponds to the number 85800 via $\widehat{N}$.

Many new applications arise from $\widehat{N}$, simply because it offers a new perspective with which to study partitions.  For an application to arithmetic densities, see \cite{DJS}.  One can also apply the supernorm map to a partition bijection in order to learn new information about certain (sometimes obscure) subsets of natural numbers.  The following example of $\widehat{N}$ applied to both sides of Euler's famous partition bijection is given in \cite{DJS}.  Note that a \emph{squarefree} positive integer is one whose prime factors are all distinct.

\begin{ex}
\emph{There is a bijection between squarefree positive integers and integers with only odd-indexed prime factors.}

Explicitly, if $n$ is a positive integer, then the number of squarefree positive integers whose prime factors' indices sum to $n$ is equal to the number of positive integers with only odd-indexed prime factors whose indices sum to $n$.
\end{ex}

The partition bijection in \eqref{bijection} with which this paper is concerned, between self-conjugate partitions and partitions into distinct odd parts, is not inherently able to give new information when factored through the supernorm map, because $\widehat{N}$ requires at least some details about the parts of partitions.  The partitions into distinct odd parts can be mapped to squarefree positive integers with only odd-indexed prime factors, but the parts of self-conjugate partitions generally do not follow a simple pattern and therefore may not have a clear correspondence with a specific subset of the positive integers.  Perhaps one could use \cite[Theorem 6.1]{Rebecca}, which gives necessary and sufficient conditions for a partition to be self-conjugate based only on the form of the parts, to make some progress in this area.

However, the set of $t$-core self-conjugate partitions is in bijection both with a specific subset of partitions into distinct odd parts for general $t$, as shown in Corollary \ref{corollary}, and with a specific subset of the positive integers for $t=2$, 3, and 7, as shown in Section \ref{section_hooks}.  One can therefore apply the supernorm map to both sides of these bijections to obtain a new subset of the positive integers which has cardinality given by the counts in equations \eqref{2core_count}, \eqref{3core_count}, \eqref{7core_count1}, and \eqref{7core_count2}.  The following examples make this correspondence explicit.

\begin{ex}[$2$-cores]\label{2core_ex}
The set of $2$-core partitions is the same as the set of perfectly triangular partitions $\gamma=(k,k-1,k-2,\dots,3,2,1)$, $k\geq1$, which are all self-conjugate, by Theorem \ref{2core_thm}.  Since we already have a description of the parts of all $2$-core partitions, we can apply the supernorm map to those partitions and also to the corresponding partitions into distinct odd parts: $\lambda=(2k-1,2k-5,2k-9,\dots,5,1)$ if $k$ is odd, or $\lambda=(2k-1,2k-5,2k-9,\dots,7,3)$ if $k$ is even.  The set $\left\{\widehat{N}(\gamma)\right\}$ for all perfectly triangular partitions $\gamma$ is the set of positive integers whose prime factorizations consist of all of the consecutive primes up to a certain point.  The set $\left\{\widehat{N}(\lambda)\right\}$ for all corresponding partitions $\lambda$ into distinct odd parts is the set of positive integers whose prime factors are all of the primes whose indices are $1\pmod{4}$ (resp. $3\pmod{4}$) up to a certain point.  Therefore, the supernorm map yields the following bijection between subsets of the positive integers:

\emph{There is a bijection between the set of positive integers whose prime factorizations consist of consecutive distinct primes starting from $p_1=2$ and the set of positive integers whose prime factorizations consist of consecutive distinct primes whose indices are $1\pmod{4}$ starting from $p_1=2$, or $3\pmod{4}$ starting from $p_3=5$.}

Explicitly, if $n$ and $k$ are fixed positive integers such that $n=k(k+1)/2$, then the number of positive integers with prime factorizations of the form $p_1\,p_2\,p_3\,\cdots\,p_k$ is equal to the number of squarefree positive integers whose prime factors consist of the first $k$ primes whose indices are $1\pmod{4}$ if $k$ is odd (resp. $3\pmod{4}$ if $k$ is even) and whose indices sum to $n$.

Notice that for each fixed $n$, these sets either have one element (if $n$ is a triangular number) or are empty by equation \eqref{2core_count}, so the explicit bijection which calculates the cardinality of each set for $2$-core partitions is basically trivial.
\end{ex}

In order to apply $\widehat{N}$ to the partitions provided in Theorem \ref{3core_thm2} and Theorems  \ref{7core_thm1} and \ref{7core_thm2}, we first identify the images under $\widehat{N}$ of the partitions $\lambda$ into distinct odd parts which correspond to the self-conjugate $3$-core and $7$-core partitions.  We fix positive integers $n$ and $t$ and apply $\widehat{N}$ to the set of all restricted partitions $\lambda=\left(\lambda_1,\lambda_2,\dots,\lambda_k\right)$ defined in Corollary \ref{corollary} to obtain the set of all squarefree positive integers with prime factorizations $\prod_{1\leq i\leq k}p_{\lambda_i},$ whose indices $\lambda_i$ are all odd and sum to $n$, such that $$\frac{1}{2}\left(\lambda_i+\lambda_j\right)\not\equiv0\pmod{t}$$ for all pairs of integers $(i,j)$ with $1\leq j\leq i\leq k$, and $$\frac{1}{2}\left(\lambda_j+1\right)+j-i-1+\#\left\{j\leq m\leq k:\lambda_m\geq 2i-2m+1\right\}\not\equiv0\pmod{t}$$ for all pairs of integers $(i,j)$ with $1\leq j\leq k<i\leq\frac{1}{2}\left(\lambda_1+1\right)$ and $\lambda_j\geq 2i-2j+1$.  Although this description is fairly difficult to use in practice, it does give a well-defined subset of the positive integers.

\begin{ex}[$3$-cores]\label{3core_ex}
The set of self-conjugate $3$-core partitions is the same as the set of partitions of the form $$\gamma_-=\left\langle1^2,2^2,\dots,(r-2)^2,(r-1)^2,r,r+2,\dots,3r-4,3r-2\right\rangle,$$ if $n=r(3r-2)$ for some $r\geq1$, or $$\gamma_+=\left\langle1^2,2^2,\dots,(r-2)^2,(r-1)^2,r^2,r+2,\dots,3r-2,3r\right\rangle,$$ if $n=r(3r+2)$ for some $r\geq1$, by Theorem \ref{3core_thm2}.  The set $\left\{\widehat{N}\left(\gamma_-\right)\right\}$ for all such partitions $\gamma_-$ is the set of positive integers with prime factorizations of the form $$2^2\cdot3^2\cdots p_{r-2}^2\cdot p_{r-1}^2\cdot p_r\cdot p_{r+2}\cdots p_{3r-4}\cdot p_{3r-2}$$ for some $r\geq1$; the set $\left\{\widehat{N}\left(\gamma_+\right)\right\}$ for all such partitions $\gamma_+$ is the set of positive integers with prime factorizations of the form $$2^2\cdot3^2\cdots p_{r-1}^2\cdot p_r^2\cdot p_{r+2}\cdots p_{3r-2}\cdot p_{3r}$$ for some $r\geq1$.  By Corollary \ref{corollary}, the set of self-conjugate $3$-core partitions is in bijection with a specific set of partitions $\lambda$ into distinct odd parts whose images under $\widehat{N}$ yield the set of positive integers described above for $t=3$.  Therefore, the supernorm map yields the following bijection between subsets of the positive integers:

\emph{There is a bijection between the set of positive integers with prime factorizations of the form $$2^2\cdot3^2\cdots p_{r-2}^2\cdot p_{r-1}^2\cdot p_r\cdot p_{r+2}\cdots p_{3r-4}\cdot p_{3r-2}$$ for some $r\geq1$ or $$2^2\cdot3^2\cdots p_{r-1}^2\cdot p_r^2\cdot p_{r+2}\cdots p_{3r-2}\cdot p_{3r}$$ for some $r\geq1$ and the set of squarefree positive integers whose prime factors' indices $\lambda_i$ are odd, such that $$\frac{1}{2}\left(\lambda_i+\lambda_j\right)\not\equiv0\pmod{3}$$ for all pairs of integers $(i,j)$ with $1\leq j\leq i\leq k$, and $$\frac{1}{2}\left(\lambda_j+1\right)+j-i-1+\#\left\{j\leq m\leq k:\lambda_m\geq 2i-2m+1\right\}\not\equiv0\pmod{3}$$ for all pairs of integers $(i,j)$ with $1\leq j\leq k<i\leq\frac{1}{2}\left(\lambda_1+1\right)$ and $\lambda_j\geq 2i-2j+1$.}

Notice that for each fixed $n$, these sets either have one element (if $n=r(3r\pm2)$ for some $r\geq1$) or are empty by equation \eqref{3core_count}, so the corresponding explicit bijection within the set of positive integers for $3$-core partitions is basically trivial as well.
\end{ex}

\begin{ex}[$7$-cores]\label{7core_ex}
There is not an explicit description of the parts of self-conjugate $7$-core partitions, so we only have the above description of the images under $\widehat{N}$ of the corresponding partitions $\lambda$ into distinct odd parts for $t=7$.  However, equations \eqref{7core_count1} and \eqref{7core_count2} give the number of self-conjugate $7$-core partitions of a positive integer as a Hurwitz class number or a linear combination of Hurwitz class numbers.  Therefore, these (linear combinations of) Hurwitz class numbers also give the number of partitions $\lambda$ described above, which is the same as the number of elements in the set $\left\{\widehat{N}(\lambda)\right\}$ for all such $\lambda$.  We have the following two new expressions for these Hurwitz class number formulas.
\begin{enumerate}
\item \emph{Fix a positive odd integer $n\not\equiv5\pmod{7}$.  If $n\equiv1\pmod{4}$ (resp. $n\equiv3\pmod{8}$), then $\frac{1}{4}H(-28n-56)$ (resp. $\frac{1}{2}H(-7n-14)$) is equal to the number of squarefree positive integers with prime factorizations $\prod_{1\leq i\leq k}p_{\lambda_i},$ whose indices $\lambda_i$ are all odd and sum to $n$, such that $$\frac{1}{2}\left(\lambda_i+\lambda_j\right)\not\equiv0\pmod{7}$$ for all pairs of integers $(i,j)$ with $1\leq j\leq i\leq k$, and $$\frac{1}{2}\left(\lambda_j+1\right)+j-i-1+\#\left\{j\leq m\leq k:\lambda_m\geq 2i-2m+1\right\}\not\equiv0\pmod{7}$$ for all pairs of integers $(i,j)$ with $1\leq j\leq k<i\leq\frac{1}{2}\left(\lambda_1+1\right)$ and $\lambda_j\geq 2i-2j+1$.  If $n\equiv7\pmod{8}$, then there are no such positive integers.}
\item \emph{Fix any positive integer $n$.  Then $$\frac{1}{4}\left(H(-28n-56)-H\left(\frac{-4n-8}{7}\right)-2H(-7n-14)+2H\left(\frac{-n-2}{7}\right)\right)$$ is equal to the number of squarefree positive integers with prime factorizations $\prod_{1\leq i\leq k}p_{\lambda_i},$ whose indices $\lambda_i$ are all odd and sum to $n$, such that $$\frac{1}{2}\left(\lambda_i+\lambda_j\right)\not\equiv0\pmod{7}$$ for all pairs of integers $(i,j)$ with $1\leq j\leq i\leq k$, and $$\frac{1}{2}\left(\lambda_j+1\right)+j-i-1+\#\left\{j\leq m\leq k:\lambda_m\geq 2i-2m+1\right\}\not\equiv0\pmod{7}$$ for all pairs of integers $(i,j)$ with $1\leq j\leq k<i\leq\frac{1}{2}\left(\lambda_1+1\right)$ and $\lambda_j\geq 2i-2j+1$.}
\end{enumerate}
\end{ex}

Although it is an understatement to say that the explicit bijections and formulas for Hurwitz class numbers given in Examples \ref{2core_ex}, \ref{3core_ex}, and \ref{7core_ex} are bulky and difficult to use, they do give ``nice" combinatorial expressions which were previously unknown, where by ``nice" we mean that they actually count elements of explicit sets of positive integers.  These counts are essentially trivial for the simplest cases of $t=2$ and $t=3$ but more interesting for $t=7$.  Perhaps if results on self-conjugate $t$-core partitions are proven for other values of $t$, the counts in these cases would also lead to interesting (albeit cumbersome) formulas for other known number theoretic quantities.

Another direction that could potentially lead to further applications is the study of $t$-cores, self-conjugate $t$-cores, and sums of squares in \cite{MT}.  Males and Tripp construct explicit bijections between families of $t$-cores and certain sets of representations of integers as sums of squares dictated by congruence conditions.  As an application, for instance, one could attempt to combine the connections provided here between self-conjugate $t$-cores and the partitions into distinct odd parts detailed in Corollary \ref{corollary} with the connections provided in \cite{MT} to obtain new results on sums of squares.

%%%%%%%%%%%%%%%%%%%%%%%%%%%%%

\section*{Acknowledgments}
The authors would like to thank Matthew Just and Robert Schneider for many enlightening discussions regarding partition statistics, and Ken Ono for his helpful comments on the content of this paper.  The authors also thank the referees for many suggestions which improved the paper.

%%%%%%%%%%%%%%%%%%%%%%%%%%%%%

\vspace{6pt}

%%%%%%%%%%%%%%%%%%%%%%%%%%%%%

\end{document}